\documentclass{amsart}

\newtheorem{theorem}{Theorem}[section]
\newtheorem{lemma}[theorem]{Lemma}

\theoremstyle{definition}

\theoremstyle{remark}
\newtheorem{remark}[theorem]{Remark}

\numberwithin{equation}{section}

\begin{document}

\title{Local Product Structure of Expansive Flows with Shadowing Property}

%    Information for first author
\author{Xiao Wen}
%    Address of record for the research reported here
\address{School of Mathematical Sciences, Beihang University, Beijing, 100191, People's Republic of China}

\email{wenxiao@buaa.edu.cn}
%    \thanks will become a 1st page footnote.
\thanks{The author was supported by National Key R\&D Program of China (No. 2022YFA1005801).}

%    General info
\subjclass[2000]{Primary 37B10,	37B65; Secondary 37D10, 20C20}

\keywords{Local product structure, Expansive flow, Shadowing property, Suspension flow}

\begin{abstract}
%% Text of abstract

We show that there exists a suspension flow defined over the full shift map which does not have local product structure. This gives a negative answer to an open problem posed by Gelfert-Kwietniak-Lima recently (\cite{GKL}).
\end{abstract}

\maketitle

\section{Introduction}
Let $X$ be a compact metric space with distance $\rho$ and $\varphi_t$ be a flow on $X$.  The flow is called {\it expansive} if for any $\varepsilon>0$, there is $\delta>0$  such that for any $x, y\in X$ and any homeomorphism $s: \mathbb{R}\to \mathbb{R}$, if $\rho(\varphi_{s(t)}(y), \varphi_t(x))<\delta$ for all $t\in\mathbb{R}$, then $\varphi_{s(t)}(y)\in\varphi_{[-\varepsilon, \varepsilon]}(\varphi_t(x))$ for all $t\in\mathbb{R}$ (see\cite{BW}). It is well known that an expansive flow has all its fixed points isolated. Given $\delta>0$, we call a sequence $\{(x_i, t_i)\}_{i=a}^b(-\infty\leq a<b\leq +\infty)$  of points $x_i\in X$ and $t_i\geq 1$ a {\it $\delta$-pseudo orbit} of flow $\varphi_t$ if $\rho(\varphi_{t_i}(x_i), x_{i+1})<\delta$ for every $i\in\mathbb{Z}$ with $a\leq i<b$. We say that a $\delta$-pseudo orbit $(x_i, t_i)_{i=-\infty}^\infty$ is $\varepsilon$-{\it traced} by the orbit of $y\in X$ if there exists an increasing homeomorphism $h: \mathbb{R}\to\mathbb{R}$ satisfying $h(0)=0$ and the following:
$$\rho(\varphi_{h(t)}(y), \varphi_{t-s_k}(x_k))<\varepsilon$$
for every $k\in\mathbb{Z}$ and $s_k\leq t<s_{k+1}$ where $s_k$ is defined by $s_0=0, s_k=t_0+\cdots+t_{k-1} (k>0)$ and $s_{k}=t_k+\cdots+t_{-1} (k<0)$. We say that $\varphi_t$ has the {\it shadowing property} if for every $\varepsilon>0$, there exists $\delta>0$ such that every $\delta$-pseudo orbit $(x_i, t_i)_{i=-\infty}^\infty$ can be $\varepsilon$-traced by an orbit of a point $y\in X$.

Given $x\in X$, the {\it strong stable set} of $x$ is defined by
$$W^s(x)=\{y\in X: \rho(\varphi_t(y), \varphi_t(x))\to 0 \text{ as } t\to+\infty \}.$$
Similarly, the strong unstable set of $x$ is given by
$$W^u(x)=\{y\in X: \rho(\varphi_t(y), \varphi_t(x))\to 0 \text{ as } t\to-\infty \}.$$
Given $\varepsilon>0$, denote by
$$W_\varepsilon^s(x)=\{y\in W^s(x): \rho(\varphi_t(y), \varphi_t(x))\leq \varepsilon, \text{ for all } t\geq 0\},$$
$$W_\varepsilon^u(x)=\{y\in W^u(x): \rho(\varphi_t(y), \varphi_t(x))\leq \varepsilon, \text{ for all } t\leq 0\}.$$
If for every sufficiently small $\varepsilon>0$ there is $\delta>0$ such that for every $x,y\in X$
with $\rho(x,y)\leq\delta$ there is a unique $\theta =\theta(x,y)\in\mathbb{R}$ with $|\theta|\leq\varepsilon$ such that the set
$W_\varepsilon^s(\varphi_\theta(x))\cap W_\varepsilon^u(y)$
contains exactly one point, which we denote by $[x,y]$, then we say the flow has {\it local product structure}.

It is well known that local product structure and the shadowing property are equivalent for expansive homeomorphisms (see \cite{Omb}). Based on this, Gelfert-Kwietniak-Lima \cite{GKL} recently proposed the following:
\bigskip

\noindent{\bf Open Problem}. Does the shadowing property imply local product structure for expansive fixed
point free flows?
\bigskip

The following theorem gives a negative answer to the above problem.
\begin{theorem}
There exists a fixed point free expansive flow with pseudo-orbit tracing property which does not possess local product structure.
\end{theorem}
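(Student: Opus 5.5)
We take a suspension flow over the full two-sided shift $\sigma:\Sigma_2\to\Sigma_2$ with a continuous roof function $r:\Sigma_2\to[1,2]$, and exploit the fact that the local product bracket must respect the time parametrization. The roof is chosen to depend on the whole past and future of the coordinates, so that sums of $r$ along two orbits that are forward asymptotic do not converge. Concretely, let $\Sigma_2=\{0,1\}^{\mathbb{Z}}$ and consider
$$r(x)=1+\sum_{n\ge 1} a_n\,\mathbf{1}[x_{-n}\ne x_n],\qquad a_n>0,\ \sum a_n<\infty,$$
or a similar function coupling coordinate $n$ with coordinate $-n$. The flow is the suspension $\varphi_t$ on $X=\{(x,s):0\le s\le r(x)\}/\!\sim$ with the Bowen--Walters metric. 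Since $r\ge 1>0$ and continuous, the flow is fixed point free.

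\textbf{Step 1 (expansivity).} It is classical, due to Bowen--Walters, that the suspension of an expansive homeomorphism under a continuous positive roof is an expansive flow. We cite this.

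\textbf{Step 2 (shadowing).} Given a $\delta$-pseudo orbit of $\varphi_t$, we project it to the base, up to time reparametrization, to get a $\delta'$-pseudo orbit of $\sigma$. The shift has shadowing, so we obtain a base point $y$ tracing it. The reparametrization $h$ is then built piecewise: on each passage through a fundamental domain, $h$ stretches linearly so that the flow time $r(\sigma^k y)$ matches the pseudo-orbit's time. Uniform continuity of $r$ keeps the distances small. This is the standard argument, essentially Thomas's theorem that suspensions of shadowing homeomorphisms shadow, because the reparametrization in the definition is arbitrary. The key point is that both expansivity and shadowing allow time changes, whereas local product structure does not.

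\textbf{Step 3 (failure of local product structure).} This is the main obstacle. Take $x,y$ in the base, at height $0$, agreeing on coordinates $|n|\le N$, so they are close. Any $z\in W^s_\varepsilon(\varphi_\theta x)\cap W^u_\varepsilon(y)$ must lie on the orbit of the base point $w$ with $w_n=x_n$ for $n\ge 0$ and $w_n=y_n$ for $n<0$; this uses expansivity of the shift together with the small-distance conditions. Then $z=(w,s)$ for some height $s$.

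The strong stable condition requires $\sum_{k\ge0}\bigl(r(\sigma^k w)-r(\sigma^k x)\bigr)$ to converge to a value fixed by $\theta$ and $s$. Similarly, the strong unstable condition pins down $\sum_{k<0}\bigl(r(\sigma^k w)-r(\sigma^k y)\bigr)$. We choose the coupling so that one of these sums fails to converge, or is unbounded. For instance, pick $x,y$ such that at time $k$ the term $\mathbf{1}[w_{k-n}\ne w_{k+n}]$ differs from the corresponding term for $x$ at indices $n>k$ reaching into the negative side. If $a_n$ is chosen so that these differences sum like $\sum_k\sum_{n>k}a_n=\sum_n n\,a_n$, we take $a_n\sim 1/n^2$. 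Then the series diverges, and no finite $\theta$, in particular none with $|\theta|\le\varepsilon$, makes $d(\varphi_t z,\varphi_{t+\theta}x)\to 0$. Hence $W^s_\varepsilon(\varphi_\theta x)\cap W^u_\varepsilon(y)=\emptyset$ for arbitrarily close $x,y$, and local product structure fails.

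The delicate work is to verify, in the Bowen--Walters metric, that asymptoticity really forces convergence of these time-difference sums, and to arrange $x,y$ with divergent sums at every scale $N$.
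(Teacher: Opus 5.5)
Your overall route is the paper's. You suspend the full shift under a continuous roof whose modulus of continuity is of order $1/\lvert\log d\rvert$, and you quote Bowen--Walters and Thomas for expansivity and shadowing. You then destroy the bracket by showing that the Birkhoff-sum discrepancy along base-asymptotic orbits grows like a harmonic series, which no time shift $\lvert\theta\rvert\le\varepsilon$ can absorb.

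The difference is the roof. The paper puts the singularity only at the fixed point, $\tau(a)=1+1/(1-\ln d(a,\underline{0}))$. Because $d(\sigma^n a,\underline{0})=2^{-n}d(a,\underline{0})$ for every $a$ in the local stable set of $\underline{0}$, the discrepancy is explicit and of one sign for every such $a\neq\underline{0}$. This gives the clean Lemma~\ref{lemma1.1}: $W^s_\varepsilon$ and $W^u_\varepsilon$ of $(\underline{0},0)$ are trivial. Then every $a$ near $\underline{0}$ off the base local stable and unstable sets is a witness. Your coupled roof is irregular at every point rather than at one orbit, but the price is that witnesses must be built by hand and signs controlled.

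That is where Step 3 is still incomplete. The quantity $\sum_n n\,a_n$ only bounds $\sum_k\lvert r(\sigma^k w)-r(\sigma^k x)\rvert$, whereas you need the signed series to diverge. In $r(\sigma^k w)-r(\sigma^k x)$ the terms with $n\le k$ vanish. For $n>k$ and $j=n-k$, the term is $0$ if $y_{-j}=x_{-j}$ and $\pm a_n$ otherwise, with sign $+$ exactly when $x_{-j}=x_{2k+j}$. For a general pair these terms cancel.

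A concrete choice closes this: take $x=\underline{0}$, and $y_i=0$ for $i\ge -N$, $y_i=1$ for $i<-N$.
\begin{itemize}
\item Then $w=y$ and $r(\sigma^k\underline{0})=1$.
\item Also $r(\sigma^k y)-1=\sum_{n>k+N}a_n>0$.
\item Hence $\sum_{k\ge 0}(r(\sigma^k y)-1)=\sum_{n>N}(n-N)a_n=\infty$ for $a_n=c/n^2$.
\end{itemize}

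The reference orbit is now periodic, so mere asymptoticity constrains the partial sums only modulo the period $1$. Your claim ``the sum must converge'' therefore needs one of two justifications:
\begin{itemize}
\item as in the paper: the partial sums tend to $\infty$ with increments tending to $0$, so they are dense mod $1$, contradicting the height estimate of Lemma~\ref{lemma1.2};
\item or by using that $\varepsilon$-closeness for all $t\ge 0$ locks the counts of section crossings, so crossing times differ by $O(\varepsilon)$ and the partial sums stay bounded.
\end{itemize}
With these two points supplied your argument is complete. At that stage it is essentially the paper's proof of Lemma~\ref{lemma1.1} applied to the single point $y$.
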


We construct the corresponding example in the following way. First, we take the symbolic space $\Sigma$ and the left shift map $\sigma$ on $\Sigma$. Then we choose a roof function $\tau$ that fails to be H\"{o}lder continuous, from which we obtain the suspension flow $\varphi_t$ defined on the suspension space $X_{\tau}$ w.r.t the roof function $\tau$. Equipped with the Bowen-Walters metric on $X_{\tau}$, we prove that the flow $\varphi_t$ does not possess the local product structure.

Here the non-H\"{o}lder continuity for the roof function $\tau$ is sharp. As a comparison, we also present the following theorem.

\begin{theorem}
Let $\sigma:\Sigma\to\Sigma$ be the left shift map on the two-symbol symbolic space $\Sigma$, and $\tau_1:\Sigma\to\mathbb{R}^+$ be a H\"{o}lder continuous roof function. Let $X_{\tau_1}$ be the suspension space and $\psi_t$ the induced suspension flow with roof function $\tau_1$. Then $\psi_t$ admits the local product structure.
\end{theorem}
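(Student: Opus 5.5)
We construct the bracket $[x,y]$ explicitly, using the product structure of the shift and the Hölder regularity of $\tau_1$. We fix the standard metric $d(\omega,\omega')=2^{-n(\omega,\omega')}$ on $\Sigma$, where $n$ is the first index $|i|$ at which the two sequences disagree. We equip $X_{\tau_1}$ with the Bowen--Walters metric, and write points as $(\omega,s)$ with $0\le s<\tau_1(\omega)$. The standard comparison (Bowen--Walters) says that, for points in the interior of the fundamental domain, the Bowen--Walters distance is comparable to $d(\omega,\omega')+|s-s'|$. Near the roof we use the identification $(\omega,\tau_1(\omega))\sim(\sigma\omega,0)$, together with the fact that $\sigma$ is Lipschitz. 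These comparisons let us reduce closeness of $x,y$ to the condition that $\omega,\omega'$ agree on a long central block $[-N,N]$, and that the heights are close after a bounded time shift.

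\textbf{Construction.} Given $x=(\omega,s)$ and $y=(\omega',s')$ with $\omega_i=\omega'_i$ for $|i|\le N$, let $z\in\Sigma$ be the sequence with $z_i=\omega'_i$ for $i<0$ and $z_i=\omega_i$ for $i\ge0$. Then $z\in W^s_{loc}(\omega)\cap W^u_{loc}(\omega')$ for $\sigma$. The key computation is the time discrepancy along forward orbits:
\[
\Delta^+(z,\omega)=\sum_{k\ge0}\bigl(\tau_1(\sigma^k z)-\tau_1(\sigma^k\omega)\bigr).
\]
Since $d(\sigma^kz,\sigma^k\omega)\le 2^{-(N+k)}$, Hölder continuity with exponent $\alpha$ bounds the $k$-th term by $C2^{-\alpha(N+k)}$. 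So $\Delta^+$ converges absolutely and $|\Delta^+|\le C'2^{-\alpha N}$. The backward discrepancy $\Delta^-(z,\omega')=\sum_{k<0}\bigl(\tau_1(\sigma^kz)-\tau_1(\sigma^k\omega')\bigr)$ is bounded in the same way. We then choose the point $p=(z,u)$ on the orbit of $z$ so that its backward time parametrization matches that of $y$: $u$ is $s'$ corrected by a quantity depending only on the convergent series $\Delta^-$. The flow orbit of $p$ then has two properties. First, $\rho(\varphi_t p,\varphi_t y)\to0$ as $t\to-\infty$, because the symbolic coordinates converge and the accumulated time difference converges to $0$. Second, $\rho(\varphi_t p,\varphi_{t+\theta}x)\to0$ as $t\to+\infty$ for $\theta=s$-type offset $+\,\Delta^+$-type correction. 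Hence $p\in W^s_\varepsilon(\varphi_\theta x)\cap W^u_\varepsilon(y)$, with $|\theta|\lesssim|s-s'|+C'2^{-\alpha N}$. This is small when $\rho(x,y)\le\delta$ and $\delta$ is chosen suitably. The uniform smallness of the distances for all $t\ge0$ (respectively $t\le0$), needed beyond the limits, follows from the same tail bounds.

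\textbf{Uniqueness.} Expansiveness of $\psi_t$ holds because suspensions of expansive homeomorphisms are expansive. Suppose $p'$ and $\theta'$ give another intersection point. Then $p$ and $p'$ lie in a common local unstable set, and $\varphi_\theta x$ and $\varphi_{\theta'}x$ have orbits that stay $2\varepsilon$-close forward. Looking at symbolic coordinates, a point of $W^s_\varepsilon\cap W^u_\varepsilon$ must have base sequence equal to $\omega$ in the future and to $\omega'$ in the past, so it equals $z$ up to shift. Both its height and $\theta$ are then determined by the asymptotic time-matching conditions, which forces $p'=p$ and $\theta'=\theta$ once $\varepsilon$ is smaller than half the minimum of $\tau_1$.

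\textbf{Main obstacle.} The main difficulty is the bookkeeping near the roof. Points that are close in the Bowen--Walters metric may sit on opposite sides of the identification, so one may have to replace $\omega$ by $\sigma\omega$ before comparing central blocks. The asymptotic matchings also have to be written as statements about $\varphi_t$ rather than about symbolic iterates. I would handle this by first applying a bounded time shift to $x$ or $y$ to move both into heights in $[\min\tau_1/3,\,2\min\tau_1/3]$, which is possible when $\delta\ll\min\tau_1$. The construction above then runs in a region where the metric comparison is clean. The essential input throughout is the summability of $\sum_k 2^{-\alpha(N+k)}$, and this summability is exactly what fails in the non-Hölder example.
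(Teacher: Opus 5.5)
Your proposal is correct in outline and follows essentially the paper's route. The flow bracket is placed on the orbit of the symbolic bracket $z$, with time corrections given by H\"{o}lder-summable series; the paper's Lemma 3.2 supplies the uniform-in-$t$ estimate that you only assert. Uniqueness comes from forcing the symbolic coordinates of any intersection point and then matching times. That forcing step is exactly the paper's Lemma 3.1, which you also assert without proof; there it is an induction that keeps the heights near mid-fiber using the lower bound of Lemma 2.1.

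One correction: the Bowen--Walters metric is built from normalized heights, so the relevant comparison is with $d(\omega,\omega')+|s/\tau_1(\omega)-s'/\tau_1(\omega')|$, not with $d(\omega,\omega')+|s-s'|$. Comparability with the latter genuinely fails for non-Lipschitz $\tau_1$: put both points at mid-height. You only use the comparison qualitatively, so nothing breaks.
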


\section{Proof of Theorem 1.1}
In this section, we prove Theorem 1.1 by constructing a suspension flow over the full shift map which has no local product structure.
Let \(\Sigma=\{0, 1\}^{\mathbb{Z}}\) and let \(\sigma\colon \Sigma\to\Sigma\) denote the left shift map. We define the metric on \(\Sigma\) by
\[
d(a, b)=\frac{1}{3}\sum_{i\in\mathbb{Z}}\frac{|a_i-b_i|}{2^{|i|}},
\]
for any two sequences \(a=(a_i)_{i\in\mathbb{Z}}, b=(b_i)_{i\in\mathbb{Z}}\in\Sigma\). Denote by
$$W_\varepsilon^s(a, \sigma)=\{b\in\Sigma: d(\sigma^n(b), \sigma^n(a))\leq \varepsilon, \text{ for all } n\geq 0\},$$
$$W_\varepsilon^u(a, \sigma)=\{b\in\Sigma: d(\sigma^n(b), \sigma^n(a))\leq \varepsilon, \text{ for all } n\leq 0\},$$
for given $a\in\Sigma$ and $\varepsilon>0$. It is easy to see that $d(\sigma^{\pm 1}(a), \sigma^{\pm 1}(b))\leq 2d(a,b)$ for all $a, b\in \Sigma$. Consequently, $\sigma^{-1}(W_\varepsilon^s(\sigma(a)), \sigma))\subset W^s_{2\varepsilon}(a, \sigma)$ and $\sigma(W_\varepsilon^u(\sigma^{-1}(a)), \sigma)\subset W^u_{2\varepsilon}(a, \sigma)$ for all $a\in\Sigma$.

Let \(\underline{0}=(0)_{i\in\mathbb{Z}}\) denote the zero sequence in \(\Sigma\). We define a positive continuous roof function \(\tau\colon\Sigma\to\mathbb{R}^+\) by
\[
\tau(a)=
\begin{cases}
\displaystyle 1+\frac{1}{-\ln d(a, \underline{0})+1}, & a\in\Sigma\setminus\{\underline{0}\},\\[6pt]
1, & a=\underline{0}.
\end{cases}
\]
Note here that $\tau$ is not H\"{o}lder continuous at $\underline{0}$.

Consider the suspension flow \(\varphi_t\) of \((\Sigma,\sigma)\) with roof function \(\tau\). The suspension space is defined as
\[
X_\tau=\big\{(a, t)\mid a\in \Sigma,\, 0\leq t\leq \tau(a)\big\}\big/\sim,
\]
where the identification is given by \((a, \tau(a))\sim (\sigma(a), 0)\). The flow \(\varphi_t\) acts on \(X_\tau\) by vertical unit-speed translation: for any \((a,s)\in X_\tau\) and \(t\geq 0\) with \(s+t\leq \tau(a)\), we have \(\varphi_t(a, s)=(a, s+t)\).

We now recall the Bowen-Walters metric on the suspension space \(X_\tau\). We first define a standard suspension space with constant roof function \(\tau_0 \equiv 1\). Let \(X_1\) be the suspension space of \((\Sigma,\sigma)\) associated with \(\tau_0\), i.e.,
\[
X_1=\Sigma\times [0,1]\big/ (x,1)\sim (\sigma(x),0).
\]
Denote by \(\phi_t\) the corresponding suspension flow on \(X_1\). For any two points \(x_1,x_2\in X_1\), consider a finite chain \(x_1=w_1,w_2,\dots,w_n=x_2\) in \(X_1\). We say this chain is {\it admissible } if for each \(1\le k\le n-1\), either \(w_k\) and \(w_{k+1}\) lie on the same vertical fiber \(\Sigma\times\{t\}\) for some \(t\in[0,1]\), or \(w_{k+1}\) belongs to the flow orbit of \(w_k\) under \(\phi_t\). For each \(k\), write \(w_k=(a_k,t_k)\) with \(a_k\in\Sigma\) and \(t_k\in[0,1]\). We define the elementary path length \(|w_k w_{k+1}|\) as follows:
\begin{enumerate}
\item If \(w_k,w_{k+1}\in\Sigma\times\{t\}\) for some common \(t\in[0,1]\), set
\[
|w_k w_{k+1}|=(1-t)d(a_k,a_{k+1})+t\,d(\sigma(a_k),\sigma(a_{k+1})).
\]
\item If no such common fiber exists, then \(w_{k+1}\) lies on the flow orbit of \(w_k\). In this case, define
\[
|w_k w_{k+1}|=\min\big\{|t|\in\mathbb{R} \,\big|\, \phi_t(w_k)=w_{k+1}\big\}.
\]
\end{enumerate}
The total length of the chain \(w_1,\dots,w_n\) is given by \(\sum\limits_{k=1}^{n-1}|w_k w_{k+1}|\). The metric \(\rho_0\) on \(X_1\) is then defined as the infimum of the lengths of all admissible chains connecting two points $x_1, x_2\in X_1$:
\[
\rho_0(x_1,x_2)=\inf\bigg\{\sum_{k=1}^{n-1}|w_k w_{k+1}|: \text{ all admissible paths } w_1=x_1,\dots,w_n=x_2\bigg\}.
\]
Using \(\rho_0\), we define the Bowen-Walters metric \(\rho\) on the general suspension space \(X_\tau\). For any two points \((a,t),(b,s)\in X_\tau\) with \(0\le t\leq\tau(a)\) and \(0\le s\leq\tau(b)\), set
\[
\rho\big((a,t),(b,s)\big)=\rho_0\bigg(\Big(a,\frac{t}{\tau(a)}\Big),\Big(b,\frac{s}{\tau(b)}\Big)\bigg).
\]

\begin{lemma}\label{lemma1.2}
For all \((a,t)\in X_\tau\) with \(0\leq t\leq \tau(a)\) and all \(b\in\Sigma\) satisfying \(\rho\big((a,t),(b,\tau(b)/2)\big)<1/4\), we have
\[
\rho\big((a,t),(b,\tau(b)/2)\big)\geq \min\big\{d(a,b),d(\sigma(a),\sigma(b))\big\},
\]
\[\rho\big((a,t),(b,\tau(b)/2)\big)\geq\big|\frac{t}{\tau(a)}-\frac{1}{2}\big|.\]
\end{lemma}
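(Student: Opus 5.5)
By the definition of the Bowen–Walters metric, it suffices to work in $X_1$ with the points $x_1=(a,u)$, $u=t/\tau(a)\in[0,1]$, and $x_2=(b,1/2)$. We must show that every admissible chain from $x_1$ to $x_2$ of length less than $1/4$ has length at least $\min\{d(a,b),d(\sigma(a),\sigma(b))\}$ and at least $|u-1/2|$. Since $\rho_0$ is an infimum, both bounds then pass to $\rho$.

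Fix an admissible chain $w_1=x_1,\dots,w_n=x_2$ of total length $L<1/4$. First we track the vertical coordinate. Horizontal steps preserve it. A flow step of length $\ell$ changes the height by $\ell$ modulo $1$. When $|w_kw_{k+1}|<1/4$, the flow step moves the height by exactly $\ell$ in the circle $\mathbb{R}/\mathbb{Z}$. Summing over the flow steps, the total flow time $T\le L$ satisfies $T\ge$ the circle distance between $u$ and $1/2$, which is $|u-1/2|$ because $|u-1/2|\le1/2$. This gives the second inequality.

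For the first inequality we track the base point. We lift the chain to $\Sigma\times\mathbb{R}$ starting at $(a,u)$ with $u\in[0,1]$. Each flow step moves the height continuously. Crossing an integer level $m\to m+1$ replaces the base point $c$ by $\sigma(c)$, and crossing downward replaces it by $\sigma^{-1}(c)$. Because $T<1/4$ and the target height $1/2$ is at distance less than $1/4$ from $u$ (modulo the lift), every height visited lies in an interval of length less than $1/2$ around the initial height.

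We split into two cases. If the visited heights stay inside $[0,1]$, that is, no level is crossed in the interior, then every horizontal step at height $s$ has length
\[
(1-s)\,d(c,c')+s\,d(\sigma c,\sigma c')\ \ge\ \min\{d(c,c'),\,d(\sigma c,\sigma c')\}.
\]
By the triangle inequality in each of the two coordinates, the sum of $(1-s_k)d(c_k,c_{k+1})+s_k d(\sigma c_k,\sigma c_{k+1})$ dominates $\min\{d(a,b),d(\sigma a,\sigma b)\}$. To see this, note that it is at least $\lambda\sum d(c_k,c_{k+1})+(1-\lambda)\sum d(\sigma c_k,\sigma c_{k+1})$ in convex-combination form. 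This needs care because the weights vary along the chain, so instead we bound each step below by $\min$ of the two terms. We then use that the family of pseudometrics $(1-s)d+s\,d\circ\sigma$ is monotone in its dominance by $\min(d,d\circ\sigma)$ pointwise.

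This is the main obstacle: a chain could alternate, using $d$-cheap steps at low height and $d\circ\sigma$-cheap steps at high height, and then the sum of mins is not obviously at least the min of the sums. We handle it by using the constraint that the height path is monotone-ish with total variation less than $1/4$ while $1/2$ lies in the middle. Steps taken at height $s\le 1/2$ cost at least $\tfrac12 d(c,c')$ and at least $\tfrac12$ of the other term. We can instead also use $d(\sigma c,\sigma c')\le 2d(c,c')$ and $d(c,c')\le 2d(\sigma c,\sigma c')$, so each step costs at least
\[
\max\{(1-s)+s/2,\ s+(1-s)/2\}\cdot(\text{the smaller of the two terms, in the matching metric}).
\]
We compare everything to the single metric $d$ if the chain's heights are mostly at most $1/2$, and to $d\circ\sigma$ otherwise. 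In fact, we will use the weight at the final height $1/2$: $\tfrac12 d+\tfrac12 d\circ\sigma\ge\min$.

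If a level crossing occurs, some horizontal steps are at heights in $[-1/4,0)$ or $(1,5/4]$, expressed through $\sigma^{\mp1}$ base points. The expansion bound $d(\sigma^{\pm1}x,\sigma^{\pm1}y)\le 2d(x,y)$ recorded earlier converts these costs into costs in $d$ or $d\circ\sigma$ of the original base points up to the needed factor, and the same summation finishes the argument.
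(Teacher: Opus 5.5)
\textbf{Verdict: genuine gap.} Your reduction to $X_1$ and your proof of the second inequality (tracking the height along the flow steps) are fine and match the paper. The first inequality, however, is never actually proved.

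You correctly notice that bounding each fiber step below by $\min\{d(c,c'),d(\sigma c,\sigma c')\}$ and summing does not give $\min\{d(a,b),d(\sigma a,\sigma b)\}$. This is exactly how the paper concludes, and that last step is false for general chains. Take $a,b$ differing exactly at coordinates $-2$ and $3$, and let $y$ agree with $b$ at coordinate $-2$ and with $a$ at coordinate $3$. Then the two minima along $a,y,b$ are $\frac1{24}$ each, while $d(a,b)=d(\sigma a,\sigma b)=\frac18$.

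None of your proposed remedies closes this:
\begin{enumerate}
\item A per-step comparison with $d$ alone fails. At height $\frac12$, a step with $d(\sigma c,\sigma c')=\frac12 d(c,c')$ costs only $\frac34 d(c,c')$.
\item The bound with coefficient $\max\{(1-s)+s/2,\,s+(1-s)/2\}\le 1$ is weaker than the trivial bound by the minimum.
\item ``Use the weight at height $\frac12$'' is asserted without any control of the error coming from steps taken at other heights.
\end{enumerate}

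Your ``level crossing'' case, treated via the factor-$2$ expansion bound, would lose a constant. That case is in fact empty. Heights change only along flow steps, of total length $T<\frac14$, and the chain ends at height $\frac12$. So every height lies in $(\frac14,\frac34)$ and no flow step changes the base point; this is the paper's first claim.

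The missing idea is that the flow cost $T$ pays for the deviation of the weights from $\frac12$. For a fiber step at height $s_k$, write $x_k=d(a_k,a_{k+1})$ and $y_k=d(\sigma a_k,\sigma a_{k+1})$. Its cost is
\[
c_k=(1-s_k)x_k+s_ky_k=\tfrac12(x_k+y_k)+(\tfrac12-s_k)(x_k-y_k).
\]
Since heights move only along flow steps, $|s_k-\frac12|\le T$. Since $d$ and $d\circ\sigma$ are within a factor $2$ of each other, $|x_k-y_k|\le\min\{x_k,y_k\}\le c_k$, so $\sum_k|x_k-y_k|\le L<\frac14$. Flow steps do not change the base point, so the triangle inequality for $d$ and for $d\circ\sigma$ along the chain gives
\[
L=\sum_k c_k+T\ \ge\ \tfrac12\bigl(d(a,b)+d(\sigma a,\sigma b)\bigr)-\tfrac{T}{4}+T\ \ge\ \min\{d(a,b),d(\sigma a,\sigma b)\}.
\]
With this estimate your outline becomes a complete proof, and it also repairs the corresponding step of the paper's argument.
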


\begin{proof}
Let \((a,t)\) be a point in $X_\tau$ with \(0\leq t\leq\tau(a)\), and let \(b\in\Sigma\) satisfy \(\rho\big((a,t),(b,\tau(b)/2)\big)<1/4\). Choose an admissible chain $$(a, t/\tau(a))=w_1, w_2, \cdots, w_n=(b, 1/2)$$ in $X_1$ such that
$$\sum_{k=1}^{n-1}|w_k w_{k+1}|<\frac{1}{4}.$$
Write $w_k=(a_k, t_k)$ with $a_k\in\Sigma$ and $t_k\in[0,1]$. We first claim that $t_k\in[1/4, 3/4]$ for all $1\leq k\leq n$. Suppose for contradiction that there is some $1\leq k\leq n-1$ such that $t_k\notin[1/4, 3/4]$. Let $k_1=\max\{k: t_k\notin [1/4, 3/4]\}$.
We call the segment $w_kw_{k+1}$ vertical if there exist no $t\in[0,1]$ such that $w_k, w_{k+1}$ are both in $\Sigma\times\{t\}$. If a vertical segment $w_kw_{k+1}$ satisfies $k>k_1$ and $a_k\neq a_{k+1}$, then we can see that the orbit segment with end points $w_k$ and $w_{k+1}$ crosses the section $\Sigma\times\{0\}$ or $\Sigma\times\{1\}$, which forces $|w_kw_{k+1}|\geq 1/4$. This contradicts the bound on the total length. Therefore, for any vertical segment $w_kw_{k+1}$ with $k>k_1$, we have $a_k=a_{k+1}$ and $|w_kw_{k+1}|\geq |t_k-t_{k+1}|$ at the same time. By the definition of $k_1$, the segment $w_{k_1}w_{k_1+1}$ is also a vertical one. If $a_{k_1}=a_{k_1+1}$, then $|w_{k_1}w_{k_1+1}|\geq |t_{k_1}-t_{k_1+1}|$. It follows that
$$\sum_{k=1}^{n-1}|w_k w_{k+1}|\geq\sum_{k=k_1}^{n-1}|w_kw_{k+1}|\geq \sum_{k=k_1}^{n-1}|t_k-t_{k+1}|\geq |t_{k_1}-t_n|=|t_{k_1}-\frac{1}{2}|\geq\frac{1}{4},$$
a contradiction. Consequently, $a_{k_1}\neq a_{k_1+1}$. As argued above, the orbit segment with end points $w_k, w_{k+1}$ acrosses the section $\Sigma\times\{0\}$ or $\Sigma\times\{1\}$, which again yields  $|t_{k_1}-t_{k+1}|\geq 1/4$, another contradiction. We conclude that $t_k\in[1/4, 3/4]$ for all $1\leq k\leq n$. A similar argument shows that $a_k=a_{k+1}$ for any vertical segment $w_{k}w_{k+1}$.

A segment $w_kw_{k+1}$ is horizontal if it is not vertical. For any horizontal segment $w_kw_{k+1}$,
\[\begin{array}{lll}|w_kw_{k+1}| & = & (1-t_k)d(a_k, a_{k+1})+t_{k}d(\sigma(a_k), \sigma(a_{k+1}))\\ &\geq &\min\{d(a_k, a_{k+1}), d(\sigma(a_k), \sigma(a_{k+1}))\}.\end{array}\]
Note that we have $\min\{d(a_k, a_{k+1}), d(\sigma(a_k), \sigma(a_{k+1}))\}=0$ for a vertical segment $w_kw_{k+1}$ because of $a_k=a_{k+1}$. Hence we have
$$\begin{array}{lll}\sum\limits_{k=1}^{n-1}|w_kw_{k+1}|&\geq& \sum\limits_{k=1}^{n-1}\min\{d(a_k, a_{k+1}), d(\sigma(a_k), \sigma(a_{k+1}))\}\\ &\geq&\min\{d(a, b), d(\sigma(a), \sigma(b))\}.\end{array}$$
Similarly, we have $$\sum\limits_{k=1}^{n-1}|w_kw_{k+1}|\geq\sum_{k=1}^{n-1}|t_{k-1}-t_k|\geq |t_0-t_n|=\big|\frac{t}{\tau(a)}-\frac{1}{2}\big|.$$
Taking the infimum over all this kind of admissible chains yields $$\rho((a, t), (b, \tau(b)/2))\geq\min\{d(a, b), d(\sigma(a), \sigma(b))\},$$
\[\rho\big((a,t),(b,\tau(b)/2))\big)\geq\big|\frac{t}{\tau(a)}-\frac{1}{2}\big|.\]
This finishes the proof of the lemma.
\end{proof}

\begin{remark}\label{remark2.1}
We remark here that the estimations in the above claim hold for all suspension flow with a continuous roof function $\tau$.
\end{remark}
It is well known that the full shift \((\Sigma,\sigma)\) is expansive and possesses the shadowing property. By Theorem 6 in \cite{BW} and Theorem 2 in \cite{Tho}, the resulting suspension flow \(\varphi_t\) is a fixed-point-free expansive flow on \(X_\tau\) with the shadowing property.

To complete the proof of Theorem 1.1, it suffices to establish the following key lemma.

\begin{lemma}\label{lemma1.1}
There is $\varepsilon_0>0$ such that for any $0<\varepsilon<\varepsilon_0$, $$W_\varepsilon^s((\underline{0}, 0))=\{(\underline{0}, 0)\}, \ \ \ W_\varepsilon^u((\underline{0}, 0))=\{(\underline{0}, 0)\}.$$
\end{lemma}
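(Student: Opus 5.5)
The plan is to argue by contradiction, using Lemma \ref{lemma1.2} with the base point of the periodic orbit taken to be $\underline{0}$. We show that any point $y=(b,s)\in W^s_\varepsilon((\underline{0},0))$ must have $b$ with a tail of zeros. The divergence of the harmonic-type series of return-time excesses then forces $y$ to drift by about half a period against the orbit of $(\underline{0},0)$, which is impossible. The unstable case is the same argument run in negative time.

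\emph{Step 1 (symbols).} Fix $\varepsilon_0<1/12$, so that in particular $\varepsilon_0<1/4$. Let $y=(b,s)\in W^s_\varepsilon((\underline{0},0))$ with $\varepsilon<\varepsilon_0$. The orbit of $(\underline{0},0)$ has period $\tau(\underline{0})=1$. At the times $t=n+1/2$ it sits at $(\underline{0},1/2)=(\underline{0},\tau(\underline{0})/2)$. Write $\varphi_{n+1/2}(y)=(a^{(n)},r_n)$ with $0\le r_n\le\tau(a^{(n)})$, where $a^{(n)}=\sigma^{m(n)}(b)$ for a nondecreasing integer sequence $m(n)$. Lemma \ref{lemma1.2} then gives two inequalities:
\[
\min\{d(a^{(n)},\underline{0}),d(\sigma(a^{(n)}),\underline{0})\}\le\varepsilon,\qquad \Big|\frac{r_n}{\tau(a^{(n)})}-\frac12\Big|\le\varepsilon .
\]
The second inequality keeps the phase of $y$ near $1/2$. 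Since $\tau$ is close to $1$, this forces $m(n+1)=m(n)+1$, so $m(n)=n+m_0$. The first inequality, with $\varepsilon<1/12$, forces coordinate $0$ or $1$ of $\sigma^{n+m_0}(b)$ to vanish for every $n\ge0$. Hence $b_i=0$ for all $i\ge N$, for some $N$.

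\emph{Step 2 (drift).} Suppose first that $b=\underline{0}$. Then $y=(\underline{0},s)$ lies on the periodic orbit itself, and $\rho(\varphi_t(y),\varphi_t(\underline{0},0))$ is a periodic function of $t$. It tends to $0$ only if it vanishes identically, i.e.\ $y=(\underline{0},0)$.

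Suppose instead that $b\neq\underline{0}$. Then for large $k$, $\sigma^k(b)$ has its $1$'s only at indices $\le N-k-1<0$. Hence $d(\sigma^k(b),\underline{0})\asymp 2^{-k}$, and
\[
\tau(\sigma^k b)-1=\frac{1}{-\ln d(\sigma^k b,\underline{0})+1}\asymp\frac{1}{k\ln 2}.
\]
Therefore the $n$-th return time of $y$ to $\Sigma\times\{0\}$ is $T_n=\sum_{k<n}\tau(\sigma^k b)+\text{const}=n+\tfrac{1}{\ln 2}\ln n+O(1)$. The lag $T_n-n$ tends to $\infty$, while its increments $\tau(\sigma^n b)-1$ tend to $0$. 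Consequently the lag modulo $1$ comes within any prescribed distance of $1/2$ infinitely often. At such a time $t=n+1/2$ the phase $r_n/\tau(a^{(n)})$ is close to $0$ or to $1$. This contradicts the second inequality of Step 1, because $|r_n/\tau-1/2|$ would be close to $1/2>\varepsilon$. Note that Lemma \ref{lemma1.2} applies throughout, since all distances are $\le\varepsilon<1/4$.

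\emph{Step 3 (unstable set).} For $W^u_\varepsilon$ the same argument runs with $t\le0$. Step 1 yields $b_i=0$ for $i\le -N$. For $b\ne\underline{0}$ we get $d(\sigma^{-k}b,\underline{0})\asymp 2^{-k}$, so again $\sum_k(\tau(\sigma^{-k}b)-1)=\infty$, and the lag produces the same contradiction.

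The main obstacle is Step 1's bookkeeping. One must show rigorously that the synchronized closeness (no reparametrization is allowed in $W^s_\varepsilon$) pins down $m(n)=n+m_0$. This means ruling out the possibility that $y$ skips or repeats a fiber between consecutive sample times. I would handle it via the phase inequality together with $|\tau-1|$ being small near $\underline{0}$, sampling at times $n+1/2$ so that Lemma \ref{lemma1.2} always applies.
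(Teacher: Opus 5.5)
Your proposal is correct and follows essentially the paper's route. Lemma~\ref{lemma1.2} forces a zero forward tail, and then the divergence of $\sum_k(\tau(\sigma^k b)-1)$, with increments tending to $0$, drives the phase through all of $[0,1)$ modulo $1$, which contradicts the phase inequality. The only difference is the sampling: the paper samples at the midpoints $\tau_n$ of $x$'s own orbit, applying Lemma~\ref{lemma1.2} with $b=\sigma^n(a)$, so it never needs your bookkeeping $m(n)=n+m_0$. That bookkeeping does go through anyway, since $1\le\tau\le 2$ and the phase stays within $\varepsilon$ of $1/2$.

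One step needs fixing. ``Coordinate $0$ or $1$ vanishes for each $n$'' does not by itself give a zero tail; a sequence like $\dots0101\dots$ satisfies it. What saves you is that for $\varepsilon<1/6$, either branch of the minimum kills both coordinates $0$ and $1$ of $\sigma^{n+m_0}(b)$. Alternatively, argue as the paper does: $d(\sigma^{n+m_0}(b),\underline{0})\le 2\varepsilon<1/3$.
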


\begin{proof}
For any sequences \(a=(a_i), b=(b_i)\in\Sigma\), if \(d(\sigma^n(a), \sigma^n(b))< 1/3\) holds for all integers \(n\geq 0\), then \(a_i = b_i\) for all \(i\geq 0\), and  \(d(\sigma^n(a), \sigma^n(b))=2^{-n}d(a, b)\) hold for all \(n\geq 1\). We fix an arbitrary constant \(0<\varepsilon<1/12\) for subsequent analysis.

Suppose \(x=(a, \theta_0)\in W_\varepsilon^s((\underline{0}, 0))\) for some \(a\in\Sigma\) and \(\theta_0\in[0,\tau(a))\). By the definition of the suspension flow, we have
\[
\varphi_{\tau(a)/2-\theta_0}(x)=(a, \tau(a)/2), \ \ \ \ \varphi_{\tau(a)-\theta_0}(x)=(\sigma(a),0),
\]
\[
\varphi_{\tau(\sigma(a))/2+\tau(a)-\theta_0}(x)=(\sigma(a), \tau(\sigma(a))/2),
\]
and inductively,
\[
\varphi_{\tau(\sigma^n(a))/2+\tau(\sigma^{n-1}(a))+\tau(\sigma^{n-2}(a))+\cdots+\tau(a)-\theta_0}(x)=(\sigma^n(a),\tau(\sigma^n(a))/2)
\]
for all $n\geq 1$. Define the sequence $\{\tau_n\}$ as
\[
\tau_n=\frac{\tau(\sigma^n(a))}{2}+\sum_{k=0}^{n-1}\tau(\sigma^k(a))-\theta_0,\quad n\geq 1,
\]
and let \(\tau_n'=\tau_n\bmod 1\). Since \(\sigma(\underline{0})=\underline{0}\) and \(\tau(\underline{0})=1\), we have \(\varphi_{\tau_n}((\underline{0},0))=(\underline{0},\tau_n')\).

Since \(x\in W_\varepsilon^s((\underline{0},0))\), one has
$$\rho((\sigma^n(a), \tau(\sigma^n(a))/2), (\underline{0}, \tau_n'))=\rho(\varphi_{\tau_n}(x), \varphi_{\tau_n}((\underline{0}, 0))\leq \varepsilon$$
for all \(n\geq 1\). Then by Lemma \ref{lemma1.2}, we conclude
\(
\min \{d(\sigma^n(a), \underline{0}), d(\sigma^{n+1}(a), \underline{0})\}\leq \varepsilon
\)
for all $n\geq 1$. Consequently, \(d(\sigma^n(a),\underline{0})\leq2\varepsilon\) for all \(n\geq 1\), which implies that $\sigma(a)\in W_{2\varepsilon}^s(\underline{0}, \sigma)$ and
\[
d(\sigma^n(a),\underline{0})=2^{-n}d(a,\underline{0})
\]
for all $n\geq 0$.

We now consider two cases. If \(a=\underline{0}\), then by \(\tau(\underline{0})=1\), we know \(\tau_n=\frac{1}{2}+n-\theta_0\) and
\[
\rho\big(\varphi_{\tau_n}(x),\varphi_{\tau_n}(\underline{0},0)\big)
=\rho\big((\underline{0},\frac{1}{2}),(\underline{0},\frac{1}{2}-\theta_0)\big).
\]
The limit \(\rho(\varphi_{\tau_n}(x),\varphi_{\tau_n}(\underline{0},0))\to 0\) as \(n\to\infty\) forces \(\theta_0=0\).
Next suppose \(a\neq\underline{0}\). For this case,
\[
\begin{aligned}
\tau_n
&=\frac{\tau(\sigma^n(a))}{2}+\sum_{k=0}^{n-1}\tau(\sigma^k(a))-\theta_0\\
&=\frac{1}{2}+\frac{1/2}{-\ln\big(2^{-n}d(a,\underline{0})\big)+1}+\sum_{k=0}^{n-1}\left[1+\frac{1}{-\ln\big(2^{-k}d(a,\underline{0})\big)+1}\right]-\theta_0\\
&=n+\sum_{k=0}^{n-1}\frac{1}{k\ln 2-\ln d(a,\underline{0})+1}+\frac{1}{2}+\frac{1}{2(n\ln 2-\ln d(a, \underline{0}) +1)}-\theta_0.
\end{aligned}
\]
One can check that \(\tau_n-n\to+\infty\) and \(\tau_{n+1}-\tau_n-1\to 0\) as \(n\to\infty\). Therefore, the sequence \(\{\tau_n'\}\) is dense in \([0,1]\), which implies that the sequence \(\{\varphi_{\tau_n}(\underline{0},0)\}=\{(\underline{0},\tau_n')\}\) is dense in the orbit segment \(\{(\underline{0},t)\mid 0\leq t\leq 1\}\).
This contradicts the limit \(\rho(\varphi_{\tau_n}(x),\varphi_{\tau_n}(\underline{0},0))=\rho(\sigma^n(a), \tau(\sigma^n(a)/2), (\underline{0}, \tau_n'))\to 0\) as \(n\to\infty\). Hence we obtain \(W_\varepsilon^s((\underline{0},0))=\{(\underline{0},0)\}\). The identical argument applied to the backward flow gives \(W_\varepsilon^u((\underline{0},0))=\{(\underline{0},0)\}\).
\end{proof}

We now finalize the proof of Theorem 1.1. Fix a constant \(0<\varepsilon<\tfrac{1}{12}\). For arbitrary \(\delta>0\), we select a point \(a\in\Sigma\) satisfying \(\rho((a,0),(\underline{0},0))<\delta\) and \(a\notin W_{4\varepsilon}^s(\underline{0}, \sigma)\cup W_{4\varepsilon}^u(\underline{0}, \sigma)\). Suppose that there exists a real number \(\tau_0\) with \(|\tau_0|<\varepsilon\) such that
\[
W_\varepsilon^s\big(\varphi_{\tau_0}(a,0)\big)\cap W_\varepsilon^u\big((\underline{0},0)\big)\neq\emptyset.
\]
By Lemma \ref{lemma1.1}, we obtain \((\underline{0},0)\in W_\varepsilon^s(\varphi_{\tau_0}(a,0))\), which in turn implies \(\varphi_{\tau_0}(a,0)=(a, \tau_0)\in W_\varepsilon^s((\underline{0}, 0))\). Repeating the computational arguments from the proof of Lemma \ref{lemma1.1} yields \(\sigma(a)\in W_{2\varepsilon}^s(\underline{0}, \sigma)\), and consequently \(a\in W_{4\varepsilon}^s(\underline{0}, \sigma)\). This contradicts the choice of \(a\). Therefore, one has
\(
W_\varepsilon^s\big(\varphi_{\tau_0}(a,0)\big)\cap W_\varepsilon^u\big((\underline{0},0)\big)=\emptyset
\) for every $|\tau_0|<\varepsilon$.
Similarly we have \(
W_\varepsilon^u\big(\varphi_{\tau_0}(a,0)\big)\cap W_\varepsilon^s\big((\underline{0},0)\big)=\emptyset,
\) for every $|\tau_0|<\varepsilon$.
Thus the flow \(\varphi_t\) does not admit the local product structure.

\section{Proof of Theorem 1.2}

In the following, we assume that $\tau_1:\Sigma\to \mathbb{R}^+$ is a H\"{o}lder continuous function, that is, there is $L\geq 1$ and $\alpha\in (0, 1]$ such that $$|\tau_1(x)-\tau_1(y)|\leq Ld(x, y)^\alpha$$ for every $x, y\in\Sigma$. Let $\psi_t$ be the suspension flow on the suspension space $X_{\tau_1}$ w.r.t. the roof function $\tau_1$. For simplicity of notation, we still use $\rho$ to denote the Bowen-Walters distance on $X_{\tau_1}$.

\begin{lemma}\label{lemma3.1}
There exists $\varepsilon_0>0$ such that for any $0<\varepsilon<\varepsilon_0$, if $(a, t)\in W_\varepsilon^s((b, \tau_1(b)/2))$ $(0\leq t\leq\tau_1(a))$, then $a\in W^s_{2\varepsilon}(b, \sigma)$ and
$$t=\frac{\tau_1(b)}{2}+\sum\limits_{n=0}^{\infty}\big[\tau_1(\sigma^n(a))-\tau_1(\sigma^n(b))\big].$$
\end{lemma}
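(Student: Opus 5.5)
We follow the template of the proof of Lemma \ref{lemma1.1}, with $(\underline{0},0)$ replaced by $(b,\tau_1(b)/2)$. Since $\tau_1$ is no longer identically $1$ along the orbit of $b$, we track both orbits at the times where they cross the middle sections. Choose $\varepsilon_0<1/12$ small enough that expansivity of $\sigma$ applies: if $d(\sigma^n(c),\sigma^n(e))<1/3$ for all $n\geq 0$, then $c_i=e_i$ for $i\geq 0$. Set $x=(a,t)$ and $y=(b,\tau_1(b)/2)$. Define the hitting times of the middle sections,
\[
T_n=\frac{\tau_1(\sigma^n(a))}{2}+\sum_{k=0}^{n-1}\tau_1(\sigma^k(a))-t,\qquad
S_n=\frac{\tau_1(\sigma^n(b))}{2}+\sum_{k=0}^{n-1}\tau_1(\sigma^k(b))-\frac{\tau_1(b)}{2},
\]
so that $\psi_{T_n}(x)=(\sigma^n(a),\tau_1(\sigma^n(a))/2)$ and $\psi_{S_n}(y)=(\sigma^n(b),\tau_1(\sigma^n(b))/2)$.

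\textbf{Step 1: $a\in W^s_{2\varepsilon}(b,\sigma)$.} The hypothesis gives $\rho(\psi_{T_n}(x),\psi_{T_n}(y))\leq\varepsilon<1/4$. We apply Lemma \ref{lemma1.2} with the roles swapped: the middle-section point is $\psi_{T_n}(x)$, and $\psi_{T_n}(y)=(\sigma^m(b),s)$ for some $m$, by Remark \ref{remark2.1}. This yields
\[
\min\{d(\sigma^n(a),\sigma^m(b)),\,d(\sigma^{n+1}(a),\sigma^{m+1}(b))\}\leq\varepsilon .
\]
The difficulty is to show $m=n$. Lemma \ref{lemma1.2} also gives $|s/\tau_1(\sigma^m(b))-1/2|\leq\varepsilon$, so $\psi_{T_n}(y)$ lies in the middle band of its fibre. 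The orbit of $y$ passes through one middle band per fibre, and the orbit of $x$ stays $\varepsilon$-close to it with bandwidth well below $1/4$. The chain argument in the proof of Lemma \ref{lemma1.2} forbids crossing the sections $\Sigma\times\{0,1\}$ at small cost. I would therefore argue by induction on $n$ that $m-n$ is constant, and then show the constant is $0$ by using $n=0$. For $n=0$ we have $T_0=\tau_1(a)/2-t$, and at that time the point $\psi_{T_0}(y)$ must lie in the fibre over $b$. Otherwise $y$ and $x$ would be at distance comparable to $1/4$ at time $0$ or at time $T_0$, since both stay in middle bands throughout.

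Once $m=n$ is established, the same bookkeeping as in the proof of Lemma \ref{lemma1.1} gives $d(\sigma^n(a),\sigma^n(b))\leq 2\varepsilon$ for all $n\geq 0$. Indeed, the minimum is attained at index $n$ or $n+1$, and $d(\sigma^{n}(\cdot),\sigma^n(\cdot))\leq 2d(\sigma^{n+1}(\cdot),\sigma^{n+1}(\cdot))$ holds once the forward coordinates agree. Hence $a\in W^s_{2\varepsilon}(b,\sigma)$. I expect this index-matching step to be the main obstacle; the rest is computation.

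\textbf{Step 2: the formula for $t$.} By Step 1 the forward coordinates agree, so $d(\sigma^n(a),\sigma^n(b))\leq 2^{-n}d(a,b)$. Hölder continuity gives
\[
|\tau_1(\sigma^n(a))-\tau_1(\sigma^n(b))|\leq L\,2^{-n\alpha}d(a,b)^{\alpha},
\]
so the series $\sum_{n\geq0}[\tau_1(\sigma^n(a))-\tau_1(\sigma^n(b))]$ converges absolutely. Next compare the two orbits at time $T_n$. Since $\rho(\psi_t(x),\psi_t(y))\to0$, we get $\rho(\psi_{T_n}(x),\psi_{T_n}(y))\to0$. Writing $\psi_{T_n}(y)=\psi_{T_n-S_n}(\psi_{S_n}(y))$ and using $d(\sigma^n(a),\sigma^n(b))\to 0$, this forces $T_n-S_n\to0$. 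This is the step where the second estimate of Lemma \ref{lemma1.2} controls the vertical (flow-direction) discrepancy $|T_n-S_n|/\tau_1(\sigma^n(b))$, and $\tau_1$ is bounded below. Finally, expanding gives
\[
T_n-S_n=\sum_{k=0}^{n-1}\big[\tau_1(\sigma^k(a))-\tau_1(\sigma^k(b))\big]+\tfrac12\big[\tau_1(\sigma^n(a))-\tau_1(\sigma^n(b))\big]-t+\frac{\tau_1(b)}{2}.
\]
The middle term tends to $0$. Letting $n\to\infty$ and using $T_n-S_n\to 0$ yields the stated formula for $t$.
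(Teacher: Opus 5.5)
Your Step 2 is fine and is essentially the paper's limiting argument. The gap is in Step 1, at the index matching $m=n$, which you flag but do not prove.

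Lemma~\ref{lemma1.2} and its chain argument only give information at an instant when one of the two points sits exactly on a middle section. They say nothing about where the other orbit is at the \emph{next} sampling time. Between $T_n$ and $T_{n+1}$ the orbit of $x$ runs for time $\frac12[\tau_1(\sigma^n(a))+\tau_1(\sigma^{n+1}(a))]$. Meanwhile $y$, sitting at $(\sigma^m(b),s)$, reaches the middle of the next fibre after time $\tau_1(\sigma^m(b))-s+\frac12\tau_1(\sigma^{m+1}(b))$. Unless you show these two times differ by less than $\frac12\tau_1(\sigma^{m+1}(b))$, $y$ may still be in fibre $m$, or already in fibre $m+2$, at time $T_{n+1}$. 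Then the induction ``$m-n$ is constant'' does not go through.

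Your remark that both orbits ``stay in middle bands throughout'' is not true, since each orbit crosses a section once per fibre. Your case $n=0$ has the same defect. Lemma~\ref{lemma1.2} at time $0$ gives $|T_0|\le\varepsilon\tau_1(a)$, which is small compared with $\tau_1(a)$ but not necessarily compared with $\tau_1(b)$. Moreover, if $T_0<0$ the hypothesis says nothing at time $T_0$. Your $\varepsilon_0$ is chosen without reference to $\tau_1$, and nothing in the sketch controls this drift.

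The paper closes exactly this gap with a one-step estimate. It samples at the middle times $t_n$ of the known orbit of $y$, so that $\psi_{t_n}(y)=(\sigma^n(b),\tau_1(\sigma^n(b))/2)$. It writes $\psi_{t_n}(x)=(\sigma^n(a),t_n')$ with $t_n'=t+t_n-\sum_{k<n}\tau_1(\sigma^k(a))$; the base case is time $0$, with $t_0'=t$. It then proves by induction that $0<t_n'<\tau_1(\sigma^n(a))$, using
\[
t_{n+1}'-\frac{\tau_1(\sigma^{n+1}(a))}{2}=\Big(t_n'-\frac{\tau_1(\sigma^{n}(a))}{2}\Big)+\frac{\tau_1(\sigma^n(b))-\tau_1(\sigma^n(a))}{2}+\frac{\tau_1(\sigma^{n+1}(b))-\tau_1(\sigma^{n+1}(a))}{2}.
\]
Lemma~\ref{lemma1.2} at step $n$ bounds the first term by $K\varepsilon$, where $K=\max\tau_1$. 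It also gives $d(\sigma^n(a),\sigma^n(b)),\,d(\sigma^{n+1}(a),\sigma^{n+1}(b))<2\varepsilon$, so H\"older continuity bounds the other two terms by $L(2\varepsilon)^\alpha$ in total. Choosing $\varepsilon_0$ with $K\varepsilon_0+L(2\varepsilon_0)^\alpha<\frac12\min\tau_1$ closes the induction. Errors do not accumulate because Lemma~\ref{lemma1.2} re-centres the height at every step.

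Alternatively, your heuristic could be made rigorous by strengthening Lemma~\ref{lemma1.2}. Its chain argument shows that if $\rho(p,q)<\ell$ and $q$ has normalized height in $(\ell,1-\ell)$, then no chain of length $<\ell$ crosses a section. A connectedness argument then keeps $y$ in one fibre while the normalized height of $x$ lies in that range. Lemma~\ref{lemma1.2}, applied at the middle times of $y$, prevents $y$ from skipping a fibre. Either way, this is the substance of Step 1, and it is missing from your proposal.
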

\begin{proof}
Let $K=\max\limits_{x\in\Sigma}\tau_1(x)$. Choose $\varepsilon_0\in(0, 1/6)$ such that $$L(2\varepsilon_0)^\alpha+K\varepsilon_0<\frac{1}{2}\min\limits_{x\in\Sigma}\tau_1(x).$$ Fix any $0<\varepsilon<\varepsilon_0$. Suppose $(a, t)\in W_\varepsilon^s((b, \tau_1(b)/2))$ for some $b\in\Sigma$ and $0\leq t\leq\tau_1(a)$. Let $t_1=\tau_1 (b)/2+\tau_1(\sigma(b))/2$, $t_2=t_1+\tau_1(\sigma(b))/2+\tau_1(\sigma^2(b))/2$ and $$t_n=t_{n-1}+\frac{\tau_1(\sigma^{n-1}(b))}{2}+\frac{\tau_1(\sigma^n(b))}{2}$$ be defined inductively. Then we have $$\psi_{t_n}((b, \frac{\tau_1(b)}{2}))=(\sigma^n(b), \frac{\tau_1(\sigma^n(b))}{2}),$$
and
$$\psi_{t_n}((a, t))=(\sigma^n(a), t+t_n-\tau_1(a)-\tau_1(\sigma(a))-\cdots-\tau_1(\sigma^{n-1}(a))).$$
Denote by $t_n'=t+t_n-\tau_1(a)-\tau_1(\sigma(a))-\cdots-\tau_1(\sigma^{n-1}(a))$.

By the fact $\rho((a, t), (b,\tau_1(b)/2))<\varepsilon$, we have
$\min\{d(a, b), d(\sigma(a), \sigma(b))\}<\varepsilon$ and $\big|\frac{t}{\tau_1(a)}-\frac{1}{2}\big|<\varepsilon$ (see Remark \ref{remark2.1}). Therefore, we have
$d(a,b)<2\varepsilon, d(\sigma(a), \sigma(b))<2\varepsilon$ and
$$\big|t-\frac{\tau_1(a)}{2}\big|<\varepsilon\tau_1(a).$$
By definition, we can see that
$$\begin{array}{ll}\big|t_1'-\frac{\tau_1(\sigma(a))}{2}\big|& = \big|t+\frac{\tau_1(b)}{2}+\frac{\tau_1(\sigma(b))}{2}-\tau_1(a)-\frac{\tau_1(\sigma(a))}{2}\big|\\
 & \leq \big|t-\frac{\tau_1(a)}{2}\big|+\frac{1}{2}\big|\tau_1(b)-\tau_1(a)\big|+\frac{1}{2}\big|\tau_1(\sigma(b))-\tau_1(\sigma(a))\big|\\
 & \leq \varepsilon\tau_1(a)+\frac{1}{2}L(d(a,b))^\alpha+\frac{1}{2}L(d(\sigma(a),\sigma(b)))^\alpha\\
 & \leq K\varepsilon+L(2\varepsilon)^\alpha<\frac{1}{2}\tau_1(\sigma(a)).
\end{array}$$
This proves that $0<t_1'<\tau_1(\sigma(a))$. By the fact $$\rho((\sigma(a), t_1'), (\sigma(b), \tau_1(\sigma(b))))=\rho(\psi_{t_1}(a, t), \psi_{t_1}(b, \tau_1(b)/2))<\varepsilon,$$
we have $\min \{d(\sigma(a), \sigma(b)), d(\sigma^2(a), \sigma^2(b))\}<\varepsilon$ and $\big|\frac{t_1'}{\tau_1(\sigma(a))}-\frac{1}{2}\big|<\varepsilon$.
Inductively, we can prove that
$$\min\{d(\sigma^n(a), \sigma^n(b)), d(\sigma^{n+1}(a), \sigma^{n+1}(b))\}<\varepsilon,$$
$$\big|\frac{t_n'}{\tau_1(\sigma^n(a))}-\frac{1}{2}\big|<\varepsilon$$
for all $n\geq 1$. This yields $d(\sigma^n(a), \sigma^n(b))\leq 2\varepsilon$ for all $n\geq 0$. Hence $a\in W_{2\varepsilon}^s(b, \sigma)$.
By the assumption that $\rho((\sigma^n(a), t_n'), (\sigma^n(b), \tau_1(\sigma^n(b))/2))\to 0$, we have
$$\big|t_n'-\frac{\tau_1(\sigma^n(b))}{2}\big|\to 0.$$
Therefore,
$$t-\frac{\tau_1(b)}{2}+\sum_{k=0}^{n-1}\big[\tau_1(\sigma^k(b))-\tau_1(\sigma^k(a))\big]\to 0$$
as $n\to\infty$. This yields
$$t=\frac{\tau_1(b)}{2}+\sum\limits_{n=0}^{\infty}\big[\tau_1(\sigma^n(a))-\tau_1(\sigma^n(b))\big],$$
and finishes the proof of the lemma.
\end{proof}

Similarly, we can choose $\varepsilon_0>0$ such that for any $0<\varepsilon<\varepsilon_0$, if $(a, t)\in W_\varepsilon^u((b, \tau_1(b)/2))$ $(0\leq t\leq\tau_1(a))$, then $a\in W^u_{2\varepsilon}(b, \sigma)$ and
$$t=\frac{\tau_1(b)}{2}+\sum\limits_{n=1}^{\infty}\big[\tau_1(\sigma^{-n}(b))-\tau_1(\sigma^{-n}(a))\big].$$

\begin{lemma}\label{lemma3.2}
For any $\varepsilon>0$, there exists $\epsilon>0$ such that for any $a\in W_{\epsilon}^s(b, \sigma)$ and any $0\leq t\leq\tau_1(b)$, we have
$$(a, s)\in W_{\varepsilon}^s((b, t)),$$
where $s=t+\sum\limits_{n=0}^{\infty}\big[\tau_1(\sigma^n(a))-\tau_1(\sigma^n(b))\big]$.
\end{lemma}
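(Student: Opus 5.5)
We will prove Lemma \ref{lemma3.2} by a direct comparison of the two orbits, point by point in time, using the Hölder continuity of $\tau_1$ to control the accumulated time difference, and then bounding the Bowen--Walters distance by an explicit admissible chain.

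First note that for $a\in W^s_\epsilon(b,\sigma)$ with $\epsilon<1/3$ we have $a_i=b_i$ for $i\geq 0$ and $d(\sigma^n(a),\sigma^n(b))=2^{-n}d(a,b)\leq 2^{-n}\epsilon$ for all $n\geq 0$. Hence
$$\sum_{n\geq m}|\tau_1(\sigma^n(a))-\tau_1(\sigma^n(b))|\leq L\epsilon^\alpha\sum_{n\geq m}2^{-n\alpha}=C_\alpha L\epsilon^\alpha 2^{-m\alpha}.$$
So the series defining $s$ converges absolutely, and the tails $R_m=\sum_{n\geq m}[\tau_1(\sigma^n(a))-\tau_1(\sigma^n(b))]$ satisfy $|R_m|\leq C\epsilon^\alpha$ uniformly, with $R_m\to0$. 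The point $(a,s)$ is interpreted in $X_{\tau_1}$ via the identification. Since $|s-t|\leq C\epsilon^\alpha$ is small, it lies in the fibres over $\sigma^{-1}(a)$, $a$ or $\sigma(a)$.

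Next, compute the two orbits at a common time $T\geq 0$. Let $n$ be such that $\psi_T(b,t)=(\sigma^n(b),u)$ with $0\leq u\leq\tau_1(\sigma^n(b))$, so $u=t+T-\sum_{k<n}\tau_1(\sigma^k(b))$. Then formally
$$\psi_T(a,s)=\Big(\sigma^n(a),\,u+\sum_{k<n}[\tau_1(\sigma^k(b))-\tau_1(\sigma^k(a))]+R_0\Big)=(\sigma^n(a),u+R_n),$$
where this is again read modulo the identification when $u+R_n$ falls slightly outside $[0,\tau_1(\sigma^n(a))]$. Thus the two points sit over $\sigma^n(a),\sigma^n(b)$ with heights differing by $R_n$, and $|R_n|\leq C\epsilon^\alpha$.

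To bound $\rho$, we build a chain in $X_1$. First move vertically from $(\sigma^n(a),(u+R_n)/\tau_1(\sigma^n(a)))$ to $(\sigma^n(a),u/\tau_1(\sigma^n(b)))$. Since $\tau_1$ is bounded below by $c>0$, the normalized heights differ by at most $|R_n|/c+K L(2^{-n}\epsilon)^\alpha/c^2$. Then move horizontally to $(\sigma^n(b),u/\tau_1(\sigma^n(b)))$, at cost at most $2d(\sigma^n(a),\sigma^n(b))\leq 2\epsilon$. If the vertical move crosses the section $\Sigma\times\{0\}$ or $\Sigma\times\{1\}$, we split it there; the horizontal piece over the neighbouring symbol uses the bound $d(\sigma^{n\pm1}(a),\sigma^{n\pm1}(b))\leq 2\cdot 2^{-n}\epsilon$, which still holds for $n=0$ backward because $d(\sigma^{-1}a,\sigma^{-1}b)\leq 2\epsilon$.

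Altogether $\rho(\psi_T(a,s),\psi_T(b,t))\leq C'(\epsilon^\alpha+\epsilon)$ for all $T\geq0$, and this tends to $0$ as $T\to\infty$ because $R_n\to0$ and $2^{-n}\epsilon\to0$. Choosing $\epsilon$ so that $C'(\epsilon^\alpha+\epsilon)<\varepsilon$ gives $(a,s)\in W^s_\varepsilon((b,t))$. The main obstacle is the bookkeeping near the roof: handling cases where $u+R_n$ exceeds the roof or is negative, and the initial time offset $s$ versus $t$. We handle these uniformly by the crossing-splitting chain above, using only the constants $c$, $K$ and $L$.
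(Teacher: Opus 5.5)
Your proof is correct, but it is organized differently from the paper's.

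\textbf{How the paper argues.} The paper compares the two orbits only at the discrete times $t_n=\sum_{k<n}\tau_1(\sigma^k(b))-t$, where $\psi_{t_n}(b,t)=(\sigma^n(b),0)$ lies on the base. At those times $\psi_{t_n}(a,s)=(\sigma^n(a),t_n')$, and $t_n'$ is exactly your tail $R_n$. The three-point chain $(\sigma^n(a),t_n'/\tau_1(\sigma^n(a)))$, $(\sigma^n(a),0)$, $(\sigma^n(b),0)$ then gives $\rho\le d(\sigma^n(a),\sigma^n(b))+|t_n'|/\min\tau_1$. The intermediate times need no further computation. The paper first fixes $\varepsilon_1$ by uniform continuity of $\psi$ on $[-K,K]\times X_{\tau_1}$, then chooses $\epsilon$ so that the discrete-time distances are below $\varepsilon_1$. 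Backward flow times cover $T\in[0,t_1]$.

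\textbf{How your route differs.} You estimate $\rho(\psi_T(a,s),\psi_T(b,t))$ at every $T\ge 0$ with a chain at an arbitrary height. This forces you to handle two things the paper avoids:
\begin{itemize}
\item the mismatch between the normalizations $\tau_1(\sigma^n(a))$ and $\tau_1(\sigma^n(b))$, which is your $KL(2^{-n}\epsilon)^\alpha/c^2$ term and disappears at height $0$;
\item the roof-crossing cases.
\end{itemize}
In return, your argument gives explicit constants depending only on $c,K,L,\alpha$ and needs no compactness step. The paper's version is shorter because it only ever works at height $0$.

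\textbf{Two small remarks on your bookkeeping.} First, a vertical segment of an admissible chain may pass through $\Sigma\times\{0\}$, since it is measured along the flow orbit. So no splitting, and no horizontal piece over a neighbouring symbol, is needed. Second, in the crossing case the normalized height of $\psi_T(a,s)$ uses $\tau_1(\sigma^{n\pm 1}(a))$ rather than $\tau_1(\sigma^n(a))$, so your displayed difference of normalized heights does not literally apply there. Going through the section directly still bounds the vertical length by $2(|R_n|+L(2^{-n}\epsilon)^\alpha)/c$. For example, if $u+R_n>\tau_1(\sigma^n(a))$, both $1-u/\tau_1(\sigma^n(b))$ and the excess height over $\sigma^{n+1}(a)$ are at most $(|R_n|+L(2^{-n}\epsilon)^\alpha)/c$. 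So your conclusion is unaffected.
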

\begin{proof}
Let $K=\max\limits_{x\in\Sigma}\tau_1(x)$. Given $\varepsilon>0$. We can fix $\varepsilon_1>0$ such that for any $(a, s), (b, t)\in X_{\tau_1}$, $\rho((a, s), (b,t))<\varepsilon_1$ implies $\rho(\psi_t(a, s), \psi_t(b, t))<\varepsilon$ for every $-K\leq t\leq K$.

Let $(b,t)\in X_{\tau_1}$ be given with $0\leq t\leq \tau_1(b)$ and $a\in W_{1/3}^s(b, \sigma)$. Let $$s=t+\sum\limits_{n=0}^{\infty}\big[\tau_1(\sigma^n(a))-\tau_1(\sigma^n(b))\big].$$
Define $t_1=\tau_1(b)-t$, $t_2=\tau_1(\sigma(b))+\tau_1(b)-t$, $\cdots, t_n=\sum\limits_{k=0}^{n-1}\tau_1(\sigma^k(b))-t$ for every $n\geq 1$. Then we have $\psi_{t_n}((b, t))=(\sigma^n(b), 0)$ and
$$\begin{array}{lll}\psi_{t_n}((a, s)) &= &(\sigma^n(a), s+t_n-\tau_1(a)-\cdots-\tau_1(\sigma^{n-1}(a)))\\ & = & (\sigma^n(a), \sum\limits_{k=n}^{\infty}\big[\tau_1(\sigma^k(a))-\tau_1(\sigma^k(b))\big]).\end{array}$$
Denote by $t_n'=\sum\limits_{k=n}^{\infty}\big[\tau_1(\sigma^k(a)-\tau_1(\sigma^k(b)))\big].$ By the Bowen-Walters definition of the distance for the suspension flow, noting that $w_1=(\sigma^n(a), \frac{t_n'}{\tau_1(\sigma^n(a))}), w_2=(\sigma^n(a), 0), w_3=(\sigma^n(b), 0)$ is an admissible chain connecting $(\sigma^n(a), \frac{t_n'}{\tau_1(\sigma^n(a))})$ and $(\sigma^n(b), 0)$, we have
$$\rho(\psi_{t_n}(a, s), \psi_{t_n}(b,t))=\rho((\sigma^n(a), t_n'), (\sigma^n(b), 0))\leq d(\sigma^n(a), \sigma^n(b))+\frac{|t_n'|}{\tau_1(\sigma^n(a))}$$
for all $n\geq 1$. Since $\tau_1$ is H\"{o}lder continuous, we have
$$\begin{array}{llll}|t_n'| & = & \left|\sum\limits_{k=n}^{\infty}\big[\tau_1(\sigma^k(a))-\tau_1(\sigma^k(b))\big]\right|  \leq \sum\limits_{n=1}^{\infty}\big|\tau_1(\sigma^k(a))-\tau_1(\sigma^k(b))\big|\\
& \leq & \sum\limits_{n=1}^{\infty} L(d(\sigma^n(a), \sigma^n(b)))^\alpha \leq L\sum\limits_{n=1}^{\infty} (2^{-n}d(a,b))^\alpha = \frac{L}{1-2^{-\alpha}}[d(a, b)]^\alpha.& \end{array}$$
Now we take $\epsilon>0$ such that
$$\epsilon+\frac{L}{(1-2^{-\alpha})\min\limits_{x\in\Sigma}\tau_1(x)}\epsilon^\alpha<\varepsilon_1.$$
Given any $a\in W_\epsilon^s(b, \sigma)$,  we can see that
$$\begin{array}{lll}\rho(\psi_{t_n}(a, s), \psi_{t_n}(b,t)) & \leq & d(\sigma^n(a), \sigma^n(b))+\frac{|t_n'|}{\tau_1(\sigma^n(a))}\\ & \leq & \epsilon+\frac{L}{(1-2^{-\alpha})\min\limits_{x\in\Sigma}\tau_1(x)}\epsilon^\alpha\\ & < &\varepsilon_1\end{array}$$
for all $n\geq 1$. And we have $\rho(\psi_{t_n}(a, s), \psi_{t_n}(b,t))\to 0$ by the fact $d(\sigma^n(a), \sigma^n(b))\to 0, |t_n'|\to 0$. By the choice of $\varepsilon_1$ we can check that $\rho(\psi_{t}(a, s), \psi_{t}(b,t))\leq\varepsilon$ for all $t\geq 0$. This proves that $(a, s)\in W_{\varepsilon}^s(b, t)$.
\end{proof}

Similarly, we can prove that there exist $\epsilon>0$ such that $a\in W_\epsilon^u(b, \sigma)$ implies
$$(a, t+\sum\limits_{n=1}^{\infty}\big[\tau_1(\sigma^{-n}(b))-\tau_1(\sigma^{-n}(a))\big])\in W_{\varepsilon}^u((b, t))$$
for every $t\in[0, \tau_1(b)]$.

Now we complete the proof of Theorem 1.2. Let $K=\max\limits_{x\in\Sigma}\tau_1(x)$. Fix any $\varepsilon>0$, we can choose $\varepsilon_1>0$ such that $\rho(\psi_{t}(x),\psi_t(y))<\varepsilon$ for any $x, y\in X_{\tau_1}$ with $d(x,y)<\varepsilon_1$ and $t\in[-K, K]$. We can easily see that $\psi_t(W_{\varepsilon_1}^s(x))\subset W_{\varepsilon}^s(\psi_t(x))$ and $\psi_t(W_{\varepsilon_1}^u(x))\subset W_{\varepsilon}^u(\psi_t(x))$ hold for every $x\in X_{\tau_1}$ and $t\in[-K, K]$. We can choose $\epsilon>0$ small enough such that
$$\frac{K}{2}\epsilon+\frac{L}{2}\epsilon^\alpha+\frac{2L}{1-2^{-\alpha}}\epsilon^\alpha<\varepsilon.$$
By Lemma \ref{lemma3.2}, we can also choose $\epsilon>0$ small enough such that $a\in W_\epsilon^s(b, \sigma)$ implies that
$$(a, t+\sum\limits_{n=0}^{\infty}\big[\tau_1(\sigma^n(a))-\tau_1(\sigma^n(b))\big])\in W_{\varepsilon_1}^s((b,t))\ \  (0\leq t\leq\tau_1(b)), $$
 and $a\in W_{\epsilon}^u(b, \sigma)$ implies that
$$(a, t+\sum\limits_{n=1}^{\infty}\big[\tau_1(\sigma^{-n}(b))-\tau_1(\sigma^{-n}(a))\big])\in W_{\varepsilon_1}^u((b,t))\ \ (0\leq t\leq\tau_1(b).$$
Since $(\Sigma, \sigma)$ has local product structure, there is $0<\delta_0<\epsilon$ such that $d(a, b)<\delta_0$ implies that $W^s_{\epsilon}(a, \sigma)\cap W_{\epsilon}^u(b, \sigma)\neq\emptyset$. Now we can choose $\delta>0$ such that $\rho(x, y)<\delta$ implies $\rho(\psi_t(x), \psi_t(y))<\delta_0/2$ holds for every $t\in[-K, K]$.

Fix any two points $x, y\in X_{\tau_1}$ with $\rho(x,y)<\delta$. We can find $t_0\in[-K/2, K/2]$ such that $\psi_{t_0}(y)=(b, \tau_1(b)/2)$ for some $b\in\Sigma$. By the choice of $\delta$, we have $\rho(\psi_{t_0}(x),\psi_{t_0}(y))<\delta_0/2$. Denote by $\psi_{t_0}(x)=(a, t)$ where $a\in \Sigma$ and $0\leq t\leq\tau_1(a)$. Similar to Lemma \ref{lemma1.2}, we have $\min\{d(a, b), d(\sigma(a), \sigma(b))\}\leq\rho(\psi_{t_0}(x),\psi_{t_0}(y))<\delta_0/2$ and then $d(a, b)<\delta_0$. By the choice of $\delta_0$, there exists $c\in W_\epsilon^s(a, \sigma)\cap W_{\epsilon}^u(b, \sigma)$. By the choice of $\epsilon$, we have
$$(c, t+\sum\limits_{n=0}^{\infty}\big[\tau_1(\sigma^n(c))-\tau_1(\sigma^n(a))\big])\in W_{\varepsilon_1}^s((a, t)),$$
$$(c, \frac{\tau_1(b)}{2}+\sum\limits_{n=1}^{\infty}\big[\tau_1(\sigma^{-n}(b))-\tau_1(\sigma^{-n}(c))\big])\in W_{\varepsilon_1}^u(b, \frac{\tau_1(b)}{2}).$$
Let $$\theta=\frac{\tau_1(b)}{2}+\sum\limits_{n=1}^{\infty}\big[\tau_1(\sigma^{-n}(b))-\tau_1(\sigma^{-n}(c))\big]-(t+\sum\limits_{n=0}^{\infty}\big[\tau_1(\sigma^n(c))-\tau_1(\sigma^n(a))\big]).$$
We have $\psi_{\theta}(W_{\varepsilon_1}^s((a, t)))\cap W_{\varepsilon_1}^u((b, \tau_1(b)/2))\neq\emptyset$. Note that $\rho((a, t), (b, \tau_1(b)/2))<\delta_0/2$, we have $|(\tau_1(a))^{-1}t-1/2|<\delta_0/2$, then
$$\left|\frac{\tau_1(b)}{2}-t\right|\leq |t-\frac{\tau_1(a)}{2}|+\frac{1}{2}|\tau_1(a)-\tau_1(b)|\leq \frac{\delta_0}{2}\tau_1(a)+\frac{L}{2}d(a,b)^{\alpha}<\frac{K}{2}\delta_0+\frac{L}{2}\delta_0^\alpha.$$
Thus
$$\begin{array}{lll}|\theta| & \leq & \left|\frac{\tau_1(b)}{2}-t\right|+\sum\limits_{n=0}^{\infty}\big|\tau_1(\sigma^n(a))-\tau_1(\sigma^n(c))\big|+\sum\limits_{n=1}^{\infty}\big|\tau_1(\sigma^{-n}(b))-\tau_1(\sigma^{-n}(c))\big|\\
 & < &  \frac{K}{2}\delta_0+\frac{L}{2}\delta_0^\alpha+ \sum\limits_{n=0}^{\infty}Ld(\sigma^n(a), \sigma^n(c))^\alpha+ \sum\limits_{n=1}^{\infty}L d(\sigma^{-n}(b), \sigma^{-n}(c))^\alpha\\
 & \leq  &  \frac{K}{2}\delta_0+\frac{L}{2}\delta_0^\alpha+L\sum\limits_{n=0}^{\infty}[2^{-n}d(a, c)]^\alpha+L\sum\limits_{n=1}^{\infty}[2^{-n}d(b,c)]^\alpha\\
 & \leq  &\frac{K}{2}\delta_0+\frac{L}{2}\delta_0^\alpha+\frac{2L}{1-2^{-\alpha}}\epsilon^\alpha\leq \frac{K}{2}\epsilon+\frac{L}{2}\epsilon^\alpha+\frac{2L}{1-2^{-\alpha}}\epsilon^\alpha<\varepsilon.
 \end{array}$$
By the choice of $\varepsilon_1$, we know that $$W_\varepsilon^s(\psi_{\theta}(x))\cap W_{\varepsilon}^u(y)\supset\psi_{-t_0}(W_{\varepsilon_1}^s(\psi_\theta((a, t)))\cap W_{\varepsilon_1}^u((b, \tau_1(b)/2)))\neq \emptyset.$$

Now we suppose that we choose $\varepsilon>0$ small enough such that $\psi_{t}(W_\varepsilon^s(x))\subset W_{\varepsilon_0}^s(\psi_t(x))$ and $\psi_{t}(W_\varepsilon^u(x))\subset W_{\varepsilon_0}^u(\psi_t(x))$ for every $x\in X_{\tau_1}$ and $t\in[-K, K]$, where $\varepsilon_0$ is the constant given by Lemma \ref{lemma3.1}. We will show that if $W_\varepsilon^s(\psi_{\theta}(x))\cap W_{\varepsilon}^u(y)\neq \emptyset$ for some $|\theta|<\varepsilon$, then it contains exactly one point. Let $z\in W_\varepsilon^s(\psi_{\theta}(x))\cap W_{\varepsilon}^u(y)$, recall that we have taken $t_0\in[-K/2, K/2]$ such that $\psi_{t_0}(y)=(b, \tau_1(b)/2)$ and $\psi_{t_0}(x)=(a, t)$. By the choice of $\varepsilon$, we have $\psi_{t_0}(z)\in W_{\varepsilon}^u((b, \tau_1(b)/2))$.
By Lemma \ref{lemma3.1} we know that there is $c\in W_{2\varepsilon}^u(b, \sigma)$ such that
$$\psi_{t_0}(z)=(c, \frac{\tau_1(b)}{2}+\sum\limits_{n=1}^{\infty}\big[\tau_1(\sigma^{-n}(b))-\tau_1(\sigma^{-n}(c))\big]))=(c, s).$$
Note that $\psi_{t_0+\frac{\tau_1(a)}{2}-t}(x)=(a, \tau_1(a)/2)$, if $\varepsilon>0$ is chosen small, then $|t_0+\frac{\tau_1(a)}{2}-t|<K$ by the fact that $|t_0|\leq K/2$, $|\frac{\tau_1(a)}{2}-t|<\frac{\delta_0}{2}\tau_1(a)\leq \frac{\delta_0}{2}K$,  hence we have
$$\psi_{t_0+\frac{\tau_1(a)}{2}-t-\theta}(z)\in\psi_{t_0+\frac{\tau_1(a)}{2}-t-\theta}(W_{\varepsilon}^s(\psi_{\theta}(x)))\subset W_{\varepsilon_0}^s((a, \tau_1(a)/2)).$$
There is $c'\in W_{2\varepsilon}^s(a, \sigma)$ such that
$$\psi_{t_0+\frac{\tau_1(a)}{2}-t-\theta}(z)=(c', \frac{\tau_1(a)}{2}+\sum_{n=0}^{\infty}\big[\tau_1(\sigma^n(c'))-\tau_1(\sigma^n(a))\big])=(c', s').$$
Note that $s$ is close to $\tau_1(b)/2$, $s'$ is close to $\tau_1(a)/2$ and $(c, s)$ and $(c', s')$ is on the same orbit that
$$(c',s')=\psi_{\frac{\tau_1(a)}{2}-t-\theta}((c,s)).$$ Hence we have $c=c'$, then by the uniqueness of $W_{2\varepsilon}^s(a, \sigma)\cap W_{2\varepsilon}^u(b, \sigma)$ we know that $z$ is uniquely determined. This proves that $W_\varepsilon^s(\psi_{\theta}(x))\cap W_{\varepsilon}^u(y)$ contains exactly one point.

\end{document}